\title[Rational inner functions on a square-matrix polyball]{Rational inner functions on a square-matrix polyball}
\author[Grinshpan]{Anatolii Grinshpan}
\author[Kaliuzhnyi-Verbovetskyi]{Dmitry~S.~Kaliuzhnyi-Verbovetskyi}
\author[Vinnikov]{Victor Vinnikov}
\author[Woerdeman]{Hugo J.~Woerdeman}
\address{Department of Mathematics \\
Drexel University\\
3141 Chestnut St.\\
Philadelphia, PA, 19104}
\email{\{tolya,dmitryk,hugo\}@math.drexel.edu}
\address{Department of Mathematics\\
Ben-Gurion University of the Negev\\
Beer-Sheva, Israel, 84105} \email{vinnikov@math.bgu.ac.il}
\dedicatory{Dedicated to the blessed memory of Cora Sadosky, our
dear friend and colleague.}
\thanks{AG, DK-V, HW were partially supported by NSF grant DMS-0901628.
DK-V and VV were partially supported by BSF grant 2010432.}
\subjclass[2010]{26C10, 32A10, 47A48, 47N70, 93B15}
\keywords{Rational inner function; matrix polyball; classical
Cartan domains; unitary realization; stable polynomial;
contractive determinantal representation; eventual Agler
denominators.}
\theoremstyle{plain}
\newtheorem{thm}{Theorem}[section]
\newtheorem{cor}[thm]{Corollary}
\newtheorem{prop}[thm]{Proposition}
\numberwithin{equation}{section}
\newcommand{\beq}{\begin{equation}}
\newcommand{\eeq}{\end{equation}}
\newcommand{\rmat}[3]{\ensuremath{{#1}}^{#2\times #3}}
\theoremstyle{remark}
\newtheorem{rem}[thm]{Remark}
\numberwithin{equation}{section}
\newcommand{\bbm}{\begin{bmatrix}}
\newcommand{\ebm}{\end{bmatrix}}
\begin{document}

\begin{abstract}
 We establish the existence of a finite-dimensional unitary
realization for every matrix-valued rational inner function from
the Schur--Agler class on a unit square-matrix polyball. In the
scalar-valued case, we characterize the denominators of these
functions. We also show that every polynomial with no zeros in the
closed domain is such a denominator. One of our tools is the
Kor\'{a}nyi--Vagi theorem generalizing Rudin's description of
rational inner functions to the case of bounded symmetric domains;
we provide a short elementary proof of this theorem suitable in
our setting.
\end{abstract}

\maketitle

\section{Introduction}\label{sec:Intro}

In this paper, we study rational inner functions on the Cartesian
product of square-matrix Cartan's domains of type I, i.e., on a
unit square-matrix polyball,
\begin{multline*}\label{eq:polyball}
{\mathcal B}=\mathbb{B}^{\ell_1\times
\ell_1}\times\cdots\times\mathbb{B}^{\ell_k\times
\ell_k}\\
=\Big\{Z=(Z^{(1)},\ldots,Z^{(k)})\in\rmat{\mathbb{C}}{\ell_1}{\ell_1}\times\cdots\times
\rmat{\mathbb{C}}{\ell_k}{\ell_k}\colon\|Z^{(r)}\|<1,\
r=1,\ldots,k\Big\}.\end{multline*} We can interpret points of
$\mathcal{B}$ as block-diagonal matrices
$Z=\bigoplus_{r=1}^kZ^{(r)}$ with $\|Z\|<1$. Then $\mathcal{B}$ is
a special case of domain $\mathcal{D}_\mathbf{P}$ defined by a
matrix polynomial $\mathbf{P}$ as a set of
$z=(z_1,\ldots,z_d)\in\mathbb{C}^d$ satisfying
$\|\mathbf{P}(z)\|<1$ (see \cite{AT,BB,GKVVW1}); in this case,
 $\mathbf{P}=Z$  viewed as a polynomial in matrix entries
$z^{(r)}_{ij}$,   $i,j=1,\ldots , \ell_r$, $r=1,\ldots , k,$ and
$d=\sum_{r=1}^k \ell_r^2$. In particular, $\mathcal{B}$ is a
special case of a Cartesian product of (not necessarily square)
matrix Cartan's domains of type I (see \cite{Hua,GKVVW2}). The
distinguished (or Shilov) boundary of ${\mathcal B}$ consists of
$k$-tuples of unitary matrices,
$$ \partial_S {\mathcal B} = \Big\{Z=(Z^{(1)},\ldots,Z^{(k)})\in\rmat{\mathbb{C}}{\ell_1}{\ell_1}\times\cdots\times
\rmat{\mathbb{C}}{\ell_k}{\ell_k}\colon Z^{(r)*}Z^{(r)} = I,\
r=1,\ldots,k\Big\},$$ which can also be interpreted as a set of
block-diagonal unitary matrices. Notice that the unit polydisk
$\mathbb{D}^d$ is a special case of a unit square-matrix polyball
where $k=d$, and $\ell_r=1$ for $r=1,\ldots,k$. We consider matrix
functions in variables $z_{ij}^{(r)}$. We denote the corresponding
$d$-tuple of variables by $z$ and, for a function $F$, we identify
$F(z)=F(Z)$. An $s\times s$ matrix-valued function $F$ is rational
inner if each matrix entry $F_{\alpha\beta}$ is a rational
function in $d$ variables $z_{ij}^{(r)}$ which is regular in
$\mathcal{B}$, and $F$ takes unitary matrix values at each of its
regular points on the distinguished boundary $\partial_S {\mathcal
B}$. Notice that the zero variety of the least common multiple of
the denominators of the rational functions $F_{\alpha\beta}$ in
their coprime fraction representation has an intersection with
$\partial_S {\mathcal B}$ of relative Lebesgue measure zero, which
can be proved using an argument analogous to that of \cite[Lemma
6.3]{BSV}; thus almost all points of $\partial_S {\mathcal B}$ are
regular points of $F$.

Define
\begin{multline*}
 {\mathcal T}_{Z}
= \Big\{ T=\bigoplus_{r=1}^k T^{(r)}\colon \\
T^{(r)}=[T_{ij}^{(r)}]_{ij=1}^{\ell_r },\
 T_{ij}^{(r)}
{\rm \ are \ commuting \ operators \ on \ a \ Hilbert \ space  \
and} \ \| T \| < 1 \Big\}.
\end{multline*}
For $T \in {\mathcal T}_Z$, the Taylor joint spectrum \cite{T1} of
$T$ viewed as a multioperator
$$(T_{ij}^{(r)})_{r=1,\ldots,k;\,i,j=1,\ldots,\ell_r}$$
lies in the domain $\mathcal{B}$, and for a matrix-valued function
$F$ analytic on $\mathcal{B}$ one can define $F(T)$ by means of
Taylor's functional calculus \cite{T2}; see \cite{AT} and a
further discussion in \cite{BB}.  We say that an $s\times s$
matrix-valued function $F$ analytic on $\mathcal{B}$ belongs to
the Schur--Agler class $\mathcal{SA}_Z(\mathbb{C}^s)$ associated
with $\mathcal{B}$, or rather with its defining polynomial
$\mathbf{P}=Z$, if its associated Agler norm,
$$ \| F \|_{{\mathcal A},Z}= \sup_{T
\in {\mathcal T}_Z} \| F(T) \|$$ is at most 1.
 In the scalar case, $s=1$, we
simply write $\mathcal{SA}_Z$. In the case of the unit polydisk
$\mathbb{D}^d$, this class coincides with the classical
Schur--Agler class $\mathcal{SA}_d$ studied in the seminal paper
of Agler \cite{Ag}. Notice that $\mathcal{SA}_Z(\mathbb{C}^s)$ is
a subclass of the Schur class $\mathcal{S}_Z(\mathbb{C}^s)$ of
$s\times s$ matrix-valued contractive analytic functions on
${\mathcal B}$. It follows from \cite{Varo} and \cite{Arv3} that
these classes do not coincide, i.e., the analog of von Neumann's
inequality fails
 when $d\ge 2$, unless ${\mathcal B}=\mathbb{D}^2$. Moreover,
 rational inner functions, which obviously belong to the Schur class, do
 not necessarily belong to the Schur--Agler class: an example of
 a rational inner function on $\mathbb{D}^3$ which is not Schur--Agler was
 given in \cite[Example 5.1]{GKVW}.

In Section \ref{sec:scalar_inner}, we give a characterization of
scalar-valued rational inner functions on $\mathcal{B}$ in terms
of their coprime fraction representation. In Section
\ref{sec:inner_SA}, we describe matrix-valued rational inner
functions on $\mathcal{B}$ that belong to the associated
Schur--Agler class as the functions that have a finite-dimensional
unitary realization. In Section \ref{sec:eventual_Agler_denom}, we
characterize eventual Agler denominators, i.e., those
multivariable polynomials which can be represented as the
denominators of scalar rational inner functions in the
Schur--Agler class $\mathcal{SA}_Z$, in terms of certain
contractive determinantal representations. We also show that every
polynomial with no zeros on the closed domain,
$\overline{\mathcal{B}}$, is an eventual Agler denominator, and we
end with several open questions.

\section{Scalar-valued rational inner functions}\label{sec:scalar_inner}

For a polynomial $p$ in $d$ variables $z_{ij}^{(r)}$,
$i,j=1,\ldots , \ell_r$, $r=1,\ldots , k,$ where $d=\sum_{r=1}^k
\ell_r^2$, we define its reverse with respect to $\mathcal{B}$ as
$$ \overleftarrow{p}(Z)= \prod_{r=1}^k (\det Z^{(r)})^{t_r} \overline{p (Z^{*-1})}, $$ where
$t_r $ is the total degree of $p$ in the variables $z_{ij}^{(r)}$,
$i,j=1,\ldots , \ell_r$. We say that a polynomial $p$ is
$\mathcal{B}$-stable (resp., strongly $\mathcal{B}$-stable) if $p$
does not have zeros in $\mathcal{B}$ (resp., in
$\overline{\mathcal{B}}$).

  The
following result is a generalization of Rudin's characterization
of rational inner functions on the polydisk \cite{Rudin} to the
case of a square-matrix polyball $\mathcal{B}$. It appears in more
generality in \cite[Theorem 3.3]{KoVa}, where Rudin's theorem is
extended to all bounded symmetric domains. We provide a proof that
applies to the specific setting of $\mathcal{B}$, and therefore
requires less machinery.

\begin{thm}\label{rudin} A scalar-valued function $f$ on $\mathcal{B}$ is rational inner
 if and only if there exist a $\mathcal{B}$-stable polynomial $p$ and
nonnegative integers $m_1$, \ldots, $m_k$ such that
\begin{equation}\label{eq:rudin} f(Z) = \prod_{r=1}^k (\det Z^{(r)})^{m_r}
\frac{\overleftarrow{p}(Z)}{p(Z)}. \end{equation} One can choose
$p$ to be coprime with $\overleftarrow{p}$.
\end{thm}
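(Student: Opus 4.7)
The plan is to prove the two implications separately: the forward direction by direct computation, and the converse by distilling the inner condition into a polynomial identity which I then analyze via unique factorization in $\mathbb{C}[Z]$. For the forward direction, any $f$ of the form \eqref{eq:rudin} with $p$ a $\mathcal{B}$-stable polynomial is analytic on $\mathcal{B}$ (since $p$ is non-vanishing there) and unimodular on $\partial_S\mathcal{B}$: at any $Z\in\partial_S\mathcal{B}$ one has $Z^{(r)*-1}=Z^{(r)}$ and $|\det Z^{(r)}|=1$, so $\overleftarrow{p}(Z)=\prod_r(\det Z^{(r)})^{t_r}\overline{p(Z)}$ yields $|\overleftarrow{p}/p|=1$, and multiplying by the unimodular factor $\prod_r(\det Z^{(r)})^{m_r}$ gives $|f|=1$.

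For the converse, write $f=P/Q$ in coprime polynomial form. Since $f$ is analytic on $\mathcal{B}$ and $\gcd(P,Q)=1$, the denominator $Q$ has no zeros in $\mathcal{B}$; in particular $\gcd(Q,\det Z^{(r)})=1$ for every $r$. The inner condition on the (full-measure) set of regular boundary points recalled in the introduction gives $P\overline{P}=Q\overline{Q}$ on a dense open subset of $\partial_S\mathcal{B}$; substituting $\overline{P(Z)}=\overleftarrow{P}(Z)\prod_r(\det Z^{(r)})^{-t_r^P}$ (valid there since $Z^{(r)*-1}=Z^{(r)}$) and likewise for $Q$, and clearing denominators, yields
\begin{equation*}
P\,\overleftarrow{P}\prod_{r=1}^{k}(\det Z^{(r)})^{t_r^Q}\;=\;Q\,\overleftarrow{Q}\prod_{r=1}^{k}(\det Z^{(r)})^{t_r^P}
\end{equation*}
on $\partial_S\mathcal{B}$. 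Because tuples of unitary matrices are Zariski dense in $\mathrm{GL}(\ell_1,\mathbb{C})\times\cdots\times\mathrm{GL}(\ell_k,\mathbb{C})$ (a standard fact via the polar decomposition and analytic continuation), the identity persists on all of $\mathbb{C}^d$.

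I then work in the UFD $\mathbb{C}[Z]$, exploiting the irreducibility of each $\det Z^{(r)}$. Factoring $P=P_0\prod_r(\det Z^{(r)})^{m_r}$ with $\gcd(P_0,\det Z^{(r)})=1$ (so $\gcd(P_0,Q)=1$), a short calculation from the definition gives $\overleftarrow{P}=\prod_r(\det Z^{(r)})^{m_r(\ell_r-1)}\overleftarrow{P_0}$, reducing the identity to $P_0\overleftarrow{P_0}\prod_r(\det Z^{(r)})^{t_r^Q}=Q\overleftarrow{Q}\prod_r(\det Z^{(r)})^{t_r^{P_0}}$. Coprimeness forces $P_0\mid\overleftarrow{Q}$ and $Q\mid\overleftarrow{P_0}$; writing $\overleftarrow{Q}=P_0B$, $\overleftarrow{P_0}=QA$, substituting back and comparing $\det Z^{(r)}$-valuations and total degrees on both sides forces $A$ and $B$ to be constants, whose modulus is pinned to one by $|P_0|=|Q|$ on $\partial_S\mathcal{B}$, so a unimodular rescaling of $Q$ yields the stated formula with $p=Q$. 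The coprime-with-$\overleftarrow{p}$ clause then follows by dividing $p$ and $\overleftarrow{p}$ by their gcd and invoking the multiplicativity $\overleftarrow{fg}=\overleftarrow{f}\,\overleftarrow{g}$ to absorb any residual $\det Z^{(r)}$ factors into the $m_r$'s. The principal technical obstacle is precisely this $\det Z^{(r)}$ bookkeeping: unlike Rudin's polydisk case (where $\overleftarrow{z_r}=1$), here $\overleftarrow{\det Z^{(r)}}=(\det Z^{(r)})^{\ell_r-1}$, so the reverse operation does not simply swap numerator and denominator degrees, and the split of $P$ into its $\det$-coprime primitive part and its $\det$-divisible part, together with careful valuation and degree tracking, is what makes the UFD step actually close.
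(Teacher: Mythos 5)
Your overall route is the same as the paper's: pass to a coprime fraction, upgrade unimodularity on $\partial_S\mathcal{B}$ to a global polynomial identity, and then exploit unique factorization together with the irreducibility of $\det Z^{(r)}$. The forward direction, the extension of the identity off the Shilov boundary, the reduction to $P_0\overleftarrow{P_0}\prod_r(\det Z^{(r)})^{t_r^Q}=Q\overleftarrow{Q}\prod_r(\det Z^{(r)})^{t_r^{P_0}}$, and the two divisibilities $P_0\mid\overleftarrow{Q}$, $Q\mid\overleftarrow{P_0}$ are all fine. The genuine gap is the assertion that ``comparing $\det Z^{(r)}$-valuations and total degrees on both sides forces $A$ and $B$ to be constants.'' This is false. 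Take $k=1$, $\ell_1=2$, $Q=\det Z+c$ with $|c|>1$ (so $Q$ is $\mathcal{B}$-stable, since $|\det Z|<1$ on $\mathcal{B}$). Then $t^Q=2$ and $\overleftarrow{Q}=(\det Z)^2\bigl(1/\det Z+\bar c\bigr)=\det Z\,(1+\bar c\det Z)$, so for the rational inner function $f=\det Z\,(1+\bar c\det Z)/(\det Z+c)$ one gets $P_0=1+\bar c\det Z$ and $B=\det Z$; likewise $\overleftarrow{P_0}=\det Z\,(\det Z+c)$, so $A=\det Z$. In particular your intermediate conclusion that $P_0$ is a unimodular constant times $\overleftarrow{Q}$ fails here, even though the theorem's conclusion does hold for this $f$ (with $m_1=0$). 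The relation $A\prod_r(\det Z^{(r)})^{t_r^Q}=B\prod_r(\det Z^{(r)})^{t_r^{P_0}}$ only pins down the ratio $A/B$ as a monomial in the determinants and does not exclude a common non-constant factor, and total-degree counting cannot exclude one either, because, unlike the polydisk case, $\deg_r\overleftarrow{Q}$ can strictly exceed $\deg_r Q$.

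What is needed at this point, and what the paper supplies, is a statement that a polynomial of modulus one everywhere on $\partial_S\mathcal{B}$ must be a constant times $\prod_r(\det Z^{(r)})^{\mu_r}$ (Proposition \ref{irr}, proved from the identity $\overleftarrow{u}\,u=\prod_r(\det Z^{(r)})^{t_r}$ on the uniqueness set $\partial_S\mathcal{B}$ together with irreducibility of $\det Z^{(r)}$). Applied to your cofactor $B$, which is unimodular on $\partial_S\mathcal{B}$, this yields $B=c_B\prod_r(\det Z^{(r)})^{b_r}$ --- a monomial, not a constant; you could reach the same conclusion by reversing $\overleftarrow{Q}=P_0B$ and using $\overleftarrow{\overleftarrow{Q}}=\prod_r(\det Z^{(r)})^{e_r}Q$, but either way an additional argument is required. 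Moreover, once $B$ is only a monomial your final formula reads $f=\prod_r(\det Z^{(r)})^{m_r-b_r}\,\overleftarrow{Q}/(c_BQ)$, and you still owe a proof that $m_r\ge b_r$; the example above shows that $b_r>0$ genuinely occurs, so this is not vacuous. The ``$\det Z^{(r)}$ bookkeeping'' you correctly identify as the technical crux is exactly the place where the argument, as written, does not close.
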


For the proof of Theorem \ref{rudin}, we will need the following
proposition.

\begin{prop}\label{irr} Let $p$ be a $\mathcal{B}$-stable polynomial, and suppose that
$|p(Z)|=1$ for all $Z\in\partial_S {\mathcal B}$. Then there exist
nonnegative integers $m_1$, \ldots, $m_k$ such that $$p(Z) =
\prod_{r=1}^k (\det Z^{(r)})^{m_r}.$$
\end{prop}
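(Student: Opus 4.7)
The plan is to convert the hypothesis $|p|=1$ on $\partial_S\mathcal{B}$ into a polynomial identity, then apply unique factorization in $\mathbb{C}[z^{(r)}_{ij}]$. On the distinguished boundary each $Z^{(r)}$ is unitary, so $Z^{(r)*-1}=Z^{(r)}$, and the defining formula for $\overleftarrow{p}$ becomes
\[
\overleftarrow{p}(Z)=\prod_{r=1}^{k}(\det Z^{(r)})^{t_r}\,\overline{p(Z)}\qquad(Z\in\partial_S\mathcal{B}).
\]
Combining this with $p(Z)\overline{p(Z)}=|p(Z)|^{2}=1$ yields the pointwise identity
\[
p(Z)\,\overleftarrow{p}(Z)=\prod_{r=1}^{k}(\det Z^{(r)})^{t_r},\qquad Z\in\partial_S\mathcal{B}.
\]

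Next I would upgrade this to an identity between polynomials on all of $\mathbb{C}^d$. The key point is that $\partial_S\mathcal{B}=U(\ell_1)\times\cdots\times U(\ell_k)$ is a maximally totally real submanifold of the ambient complex space: each $U(\ell_r)$ has real dimension $\ell_r^2$, matching the complex dimension of $\mathbb{C}^{\ell_r\times\ell_r}$, and hence is a uniqueness set for holomorphic polynomials. One can make this concrete by parametrizing a neighborhood in $U(\ell_r)$ as $Z^{(r)}=e^{iH_r}U_0$ with $H_r$ Hermitian and invoking the identity theorem after complexifying the entries of $H_r$. Either way, $p\cdot\overleftarrow{p}$ and $\prod_r(\det Z^{(r)})^{t_r}$ must coincide as elements of $\mathbb{C}[z^{(r)}_{ij}]$.

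With the polynomial identity in hand, unique factorization finishes the argument. Each $\det Z^{(r)}$ is irreducible (a classical fact about the determinant of a generic matrix), and those for different $r$ involve disjoint variables, hence are pairwise coprime. Since $p$ divides $\prod_r(\det Z^{(r)})^{t_r}$, we conclude
\[
p(Z)=c\,\prod_{r=1}^{k}(\det Z^{(r)})^{m_r}
\]
for some constant $c$ and integers $0\le m_r\le t_r$; evaluating $|p|=1$ at $Z=(I,\ldots,I)\in\partial_S\mathcal{B}$ forces $|c|=1$, with this unimodular constant to be absorbed into the stated conclusion. The $\mathcal{B}$-stability hypothesis is compatible with this structure, since the only irreducible factors available to build $p$ are the $\det Z^{(r)}$ themselves.

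The principal obstacle I anticipate is the extension step — rigorously propagating the boundary identity to a polynomial identity on $\mathbb{C}^d$ via the maximally totally real property of $\partial_S\mathcal{B}$. The remaining ingredients (the simplification of $\overleftarrow{p}$ at unitary points, the irreducibility of $\det Z^{(r)}$, and the routine UFD bookkeeping) are standard.
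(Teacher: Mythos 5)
Your proposal is correct and follows essentially the same route as the paper: establish $p(Z)\,\overleftarrow{p}(Z)=\prod_{r=1}^k(\det Z^{(r)})^{t_r}$ on $\partial_S\mathcal{B}$, extend to a polynomial identity using that the distinguished boundary is a uniqueness set for analytic functions (the paper simply cites Hua for this, where you sketch the totally-real argument), and conclude via irreducibility of $\det Z^{(r)}$ and unique factorization. Your explicit tracking of the unimodular constant $c$ is a minor point of added care not present in the paper's own write-up, but the argument is the same.
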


\begin{proof} Notice that if $Z^{(r)}$ is unitary for each $r$, then
\begin{equation}\label{prevp} \overleftarrow{p} (Z) p(Z) =  \prod_{r=1}^k (\det Z^{(r)})^{t_r}. \end{equation}
Since $\partial_S {\mathcal B}$ is a uniqueness set for analytic
functions (see, e.g., \cite{Hua}), we have that \eqref{prevp}
holds for all
$Z=(Z^{(1)},\ldots,Z^{(k)})\in\rmat{\mathbb{C}}{\ell_1}{\ell_1}\times\cdots\times
\rmat{\mathbb{C}}{\ell_k}{\ell_k}$. Since $\det Z^{(r)}$ is an
irreducible polynomial in matrix entries $z^{(r)}_{ij}$ (see,
e.g., \cite[Section 61]{Bocher}) we obtain
  that $ p(Z) = \prod_{r=1}^k (\det Z^{(r)})^{m_r}$ for some $m_r \le t_r$, $r=1,\ldots,k$.
\end{proof}

\begin{proof}[Proof of Theorem \ref{rudin}] The sufficiency of the representation \eqref{eq:rudin} for
$f$ to be rational inner is clear. To prove the necessity, we
first write $f=q/p$, with $p$ and $q$ coprime. Since $f$ is
analytic in ${\mathcal B}$, we have that $p$ is ${\mathcal
B}$-stable. Next, for $Z\in\partial_S {\mathcal B}$ we have that
$q(Z)\overline{q(Z^{-1*})} = p(Z)\overline{p(Z^{-1*})}$. Hence the
equality
$$ \prod_{r=1}^k (\det Z^{(r)})^{\tau_r} q(Z)\overline{q(Z^{-1*})} = \prod_{r=1}^k (\det Z^{(r)})^{\tau_r}
 p(Z)\overline{p(Z^{-1*})}$$
 holds for every $Z\in\partial_S\mathcal{B}$,
 where $\tau_r $ is the maximum of the total degrees in $z_{ij}^{(r)}$,
 $i,j=1,\ldots,\ell_r$, in $q$ and $p$.
Then it also holds for all $Z$. Since $q$ and $p$ are coprime, $p$
is a divisor of $ \prod_{r=1}^k (\det Z^{(r)})^{\tau_r}
\overline{q(Z^{-1*})}$. Hence
$$  u(Z) q(Z)= \prod_{r=1}^k (\det Z^{(r)})^{\tau_r} \overline{p(Z^{-1*})}, $$ for some polynomial $u$.
 Observe that $|u(Z)|=1$ on $\partial_S\mathcal{B}$. Then by Proposition \ref{irr} we
 obtain that $u(Z)=\prod_{r=1}^k (\det Z^{(r)})^{\mu_r}$, with some
 $\mu_r\le\tau_r$, and $$q(Z)=\prod_{r=1}^k (\det Z^{(r)})^{\tau_r-\mu_r}
 \overline{p(Z^{-1*})}.$$
 The latter equality implies that $m_r:=\tau_r-\mu_r-t_r\ge 0$,
 where $t_r$ is the total degree of $p$ in variables
 $z^{(r)}_{ij}$, $i,j=1,\ldots,\ell_r$. It follows that
  $$q(Z)=\prod_{r=1}^k (\det Z^{(r)})^{m_r}
 \overleftarrow{p}(Z),$$
 and  \eqref{eq:rudin} holds.
\end{proof}

\begin{rem}\label{rem:sym}\rm Proposition \ref{irr} and Theorem \ref{rudin} also hold when ${\mathcal B}$ is replaced by
a Cartesian product of square-matrix Cartan's domains of type II
\cite{Hua},
$$ {\mathcal B}_{\rm sym} = \Big\{Z=(Z^{(1)},\ldots,Z^{(k)})\in {\mathcal B}\colon
Z^{(r)} = Z^{(r)\top}, r=1,\ldots , k \Big\}, $$ or by a Cartesian
product of mixed type involving Cartan's domains of types I and
II. The proofs are similar after noticing that $\det Z^{(r)}$ as a
polynomial in $z^{(r)}_{ij}$, $i,j=1,\ldots,\ell_r$: $i\le j$, is
also irreducible when $Z^{(r)} = Z^{(r)\top}$; see, e.g.,
\cite[Section 61]{Bocher}.
\end{rem}

\section{Matrix-valued rational inner functions from the Schur--Agler class}
\label{sec:inner_SA}

We now characterize matrix-valued rational inner functions on the
unit square-matrix polyball $\mathcal{B}$, which belong to the
associated Schur--Agler class, in terms of their unitary
realizations. This is a generalization of the result \cite[Theorem
2.1]{BK} for the unit polydisk $\mathbb{D}^d$ which, in turn, is a
matrix-valued extension of the result from \cite{Knese2011} in the
scalar-valued setting (see also an earlier paper \cite{ColeWermer}
for the bidisk case).

\begin{thm}\label{fd} An $s\times s$ matrix-valued function $F$ on $\mathcal{B}$ is rational inner and belongs to the class
$\mathcal{SA}_Z(\mathbb{C}^s)$ if and only if $F$ has a
finite-dimensional unitary realization, i.e., there exist
nonnegative integers $n_1$, \ldots, $n_k$ and a unitary matrix
$$ \begin{bmatrix} A & B \\ C & D \end{bmatrix} \in {\mathbb C}^{(\sum_{r=1}^k \ell_r n_r + s)\times
(\sum_{r=1}^k \ell_r n_r + s)} $$ such that
$$ F(Z) = D + CZ_n (I-AZ_n)^{-1}B, $$
where $Z_n = \bigoplus_{r=1}^k (Z^{(r)} \otimes I_{n_r}). $ If we
write $F=QP^{-1}$, where $P$ and $Q$ are matrix polynomials of
total degree at most $g$ such that $P^*P=Q^*Q$ on
$\partial_S\mathcal{B}$,  then
 $n_1, \ldots, n_k$ can be chosen so that
$n_r \le l_rs\binom{g+d-1}{d}$, $r=1,\ldots,k$.
\end{thm}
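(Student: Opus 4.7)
For the easy direction, assume $F(Z) = D + CZ_n(I - AZ_n)^{-1}B$ comes from a unitary colligation $U = \begin{bmatrix} A & B \\ C & D \end{bmatrix}$. A standard manipulation using $U^*U = I$ yields
$$I_s - F(Z)^*F(W) = B^*(I - Z_n^*A^*)^{-1}(I - Z_n^*W_n)(I - AW_n)^{-1}B,$$
and since $I - Z_n^*W_n = \bigoplus_r (I_{\ell_r} - Z^{(r)*}W^{(r)}) \otimes I_{n_r}$, the right-hand side is a sum of the Agler form that places $F$ in $\mathcal{SA}_Z(\mathbb{C}^s)$ upon evaluation at $T \in \mathcal{T}_Z$. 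Rationality is immediate; at $Z \in \partial_S\mathcal{B}$ each $Z^{(r)}$ is unitary, so $Z_n$ is unitary and a direct computation gives $F(Z)^*F(Z) = I_s$, showing $F$ is inner.

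\textbf{Necessity; setup and polynomial Agler decomposition.} Assume $F$ is rational inner and Schur--Agler. Theorem \ref{rudin} applied after clearing denominators supplies a coprime representation $F = QP^{-1}$ with $s \times s$ matrix polynomials of total degree at most $g$ satisfying $P^*P = Q^*Q$ on $\partial_S\mathcal{B}$. From the Schur--Agler hypothesis one extracts an a priori infinite-dimensional Agler decomposition
$$I_s - F(Z)^*F(W) = \sum_{r=1}^k G_r(Z)^*\bigl((I_{\ell_r} - Z^{(r)*}W^{(r)}) \otimes I_{\mathcal{H}_r}\bigr) G_r(W);$$
multiplying by $P(Z)^*$ on the left and $P(W)$ on the right converts this into an analogous decomposition of $P(Z)^*P(W) - Q(Z)^*Q(W)$ with $H_r = G_r P$. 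The main step is to upgrade $H_r$ to an $\ell_r n_r \times s$ matrix polynomial of total degree at most $g-1$ with $n_r \leq \ell_r s \binom{g+d-1}{d}$. One considers the convex cone $\mathcal{C}_g$ of $s\times s$ hermitian matrix polynomials admitting such a representation; $\mathcal{C}_g$ lives in the finite-dimensional space of hermitian matrix polynomials of total degree at most $2g$, and its closedness is a standard dimension count. If $P^*P - Q^*Q \notin \mathcal{C}_g$, Hahn--Banach yields a linear functional $\Lambda$ that is nonnegative on $\mathcal{C}_g$ and strictly negative at $P^*P - Q^*Q$. A GNS-type construction on $\mathbb{C}^s$-valued polynomials modulo the kernel of $\Lambda$ produces a commuting block tuple $T = \bigoplus_r T^{(r)} \in \mathcal{T}_Z$ with $\|T\| \leq 1$ and $\Lambda(R) = \langle R(T)\xi, \xi \rangle$ on the relevant polynomial subspace; the Schur--Agler property $\|F(T)\| \leq 1$ then forces $\Lambda(P^*P - Q^*Q) = \langle P(T)^*(I - F(T)^*F(T))P(T)\xi, \xi\rangle \geq 0$, contradicting strict separation.

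\textbf{Lurking isometry and dimension bound.} Having a polynomial decomposition in hand, polarize and rearrange to
$$Q(Z)^*Q(W) + \sum_r H_r(Z)^*H_r(W) = P(Z)^*P(W) + \sum_r H_r(Z)^*\bigl(Z^{(r)*}W^{(r)} \otimes I_{n_r}\bigr)H_r(W),$$
which identifies a linear isometry between two span-of-point-evaluations subspaces. Extending to a unitary on the ambient finite-dimensional space (padding with a free piece if necessary) and arranging the pieces into the natural block structure produces the colligation $\begin{bmatrix} A & B \\ C & D \end{bmatrix}$ whose transfer function is $F$. The bound on $n_r$ follows because $H_r$ is built from $\ell_r n_r \times s$ matrix polynomials of total degree at most $g-1$, and the space of such polynomials has dimension $s\binom{g+d-1}{d}$ per $\ell_r$-row block.

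\textbf{Main obstacle.} The crux is the polynomial Agler decomposition: one needs both the closedness of $\mathcal{C}_g$ (which requires handling the block structure carefully) and, more substantively, a GNS construction from the separating functional $\Lambda$ that respects the block-diagonal form of $\mathcal{T}_Z$—each $T^{(r)}$ must be an $\ell_r \times \ell_r$ matrix of commuting operators of norm at most one. This matricial refinement is precisely what distinguishes the argument from the polydisk case treated in \cite{BK} and \cite{Knese2011}.
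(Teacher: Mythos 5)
Your sufficiency argument and your final lurking-isometry step match the paper's in substance. For the hard step --- producing a \emph{finite-dimensional, polynomial} Agler decomposition of $P(W)^*P(Z)-Q(W)^*Q(Z)$ --- you take a genuinely different route: a cone-separation/GNS argument in the spirit of \cite{ColeWermer} and \cite{Knese2011}, done from scratch. The paper instead imports the (a priori infinite-dimensional, merely analytic) decomposition from \cite[Theorem 1.5]{BB}, then (a) proves the $H_r$ are polynomials of total degree at most $g-1$ by restricting to $Z=W=tU$ with $U\in\partial_S\mathcal{B}$ and comparing zeroth Fourier coefficients in the $u^{(r)}_{ij}$, and (b) reduces to finite dimensions via Choi's theorem on completely positive maps (Proposition \ref{Choi}), which is what delivers the bound $n_r\le \ell_r s\binom{g+d-1}{d}$. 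Your route, if carried out, would be more self-contained; the paper's buys an explicit and clean dimension bound at the price of citing \cite{BB}. (Minor point: Theorem \ref{rudin} is scalar-valued, so it does not directly give $F=QP^{-1}$; one simply takes $P=pI_s$ with $p$ the least common denominator and $Q=pF$.)

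However, as written your argument has a genuine gap precisely where you locate "the main obstacle," and most visibly in the dimension bound. The difficulty is not that the coefficient space of degree-$(g-1)$ polynomials has dimension $s\binom{g+d-1}{d}$ "per $\ell_r$-row block"; a naive rank/dimension count only shows that the $H_{r,\alpha}$ can be compressed to a finite-dimensional subspace of $\mathbb{C}^{\ell_r}\otimes\mathcal{K}_r$, and that subspace need not have the tensor-product form $\mathbb{C}^{\ell_r}\otimes\mathcal{K}_r'$. What the realization requires is a decomposition against the structured kernel $(I-W^{(r)*}Z^{(r)})\otimes I_{n_r}$, and preserving this tensor structure under compression is exactly the content of Choi's theorem (and is why the paper's bound carries the extra factor $\ell_r$, coming from $Y_r\in\mathbb{C}^{a^2b\times b}$ with $a=\ell_r$). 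The same structural issue reappears in your cone argument: closedness of $\mathcal{C}_g$ is not "a standard dimension count" (the generators $H_r(Z)^*((I-Z^{(r)*}W^{(r)})\otimes I)H_r(W)$ are not positive semidefinite kernels term by term, so one must first normalize a converging sequence of decompositions), and the Carath\'eodory-type reduction that would cap $n_r$ must again respect the block $\otimes I_{n_r}$. Likewise the GNS step needs the functional $\Lambda$, nonnegative on a cone of degree-$\le 2g$ polynomials only, to produce a tuple $T\in\mathcal{T}_Z$ on which $P(T)$ is invertible and $F(T)P(T)=Q(T)$ holds; the degree truncation means the multiplication operators do not literally act on the GNS space, so a compression-plus-limiting argument is required. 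None of these points is fatal --- they are resolved in the polydisk literature you cite --- but they are the substance of the theorem, and your proposal asserts rather than proves them; the paper's use of \cite{BB} plus Proposition \ref{Choi} is how it avoids having to redo them.
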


For the proof of Theorem \ref{fd}, we will need the following
proposition. Recall that a linear mapping $\Phi \colon {\mathbb
C}^{a\times a} \to  {\mathbb C}^{b\times b}$ is said to be
completely positive if, for every $m\in\mathbb{N}$, the mapping
$\Phi^{(m)}\colon ({\mathbb C}^{a\times a})^{m\times m} \to
({\mathbb C}^{b\times b})^{m\times m}$ defined by
$(\Phi^{(m)}(A))_{ij}=\Phi(A_{ij})$,\ $i,j=1,\ldots,m$, is
positive, that is, it maps every positive semidefinite matrix $A$
to a positive semidefinite matrix $\Phi^{(m)}(A)$.

\begin{prop}[{\cite[Theorem 1]{Choi}}]\label{Choi} Let $\Phi \colon {\mathbb C}^{a\times a} \to  {\mathbb C}^{b\times b}$
 be a completely positive linear mapping. Then there exists $Y\in {\mathbb C}^{a^2b\times b}$ so that
$\Phi(X) = Y^*(X\otimes I_{ab})Y$.
\end{prop}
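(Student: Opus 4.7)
The plan is to follow Choi's original argument via the so-called Choi matrix. First I form
\[
C_\Phi \;=\; \sum_{i,j=1}^a E_{ij}\otimes\Phi(E_{ij})\;\in\;\mathbb{C}^{ab\times ab},
\]
where $E_{ij}$ are the standard matrix units in $\mathbb{C}^{a\times a}$. The key observation is that the matrix $\sum_{i,j}E_{ij}\otimes E_{ij}\in\mathbb{C}^{a^2\times a^2}$ equals the rank-one positive semidefinite matrix $ee^{*}$, where $e=\sum_{i=1}^{a}e_{i}\otimes e_{i}\in\mathbb{C}^{a^2}$, and that $C_\Phi = \Phi^{(a)}(ee^{*})$. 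Since $\Phi$ is completely positive, taking $m=a$ in the definition forces $C_\Phi \geq 0$.

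Next, because $C_\Phi$ is a positive semidefinite matrix of size $ab$, I factor it via its spectral decomposition as
\[
C_\Phi \;=\; \sum_{k=1}^{ab}\xi_{k}\xi_{k}^{*},
\]
padding with zero vectors if the rank is strictly less than $ab$. Using the natural identification $\mathbb{C}^{ab}\cong\mathbb{C}^{a}\otimes\mathbb{C}^{b}$, each column vector $\xi_k$ corresponds uniquely to a matrix $V_{k}\in\mathbb{C}^{a\times b}$ via $\xi_{k}=\sum_{i,j}(V_{k})_{ij}(e_{i}\otimes e_{j})$. A direct block-by-block comparison of the two expressions for $C_\Phi$ (using that the $(i,p)$-block of $C_\Phi$ is $\Phi(E_{ip})$) and then linearity in $X$ yield the Kraus-type representation
\[
\Phi(X) \;=\; \sum_{k=1}^{ab} V_{k}^{*}X V_{k}, \qquad X\in\mathbb{C}^{a\times a}.
\]

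Finally I assemble the $V_k$'s into a single matrix $Y\in\mathbb{C}^{a^{2}b\times b}$ whose rows are indexed by pairs $(i,k)$ with $i\in\{1,\ldots,a\}$, $k\in\{1,\ldots,ab\}$, ordered to match the Kronecker convention of the statement, and whose $(i,k)$-row is the $i$-th row of $V_k$. With this ordering, partitioning $Y$ into $a$ horizontal blocks $Y_1,\ldots,Y_a\in\mathbb{C}^{ab\times b}$ gives
\[
Y^{*}(X\otimes I_{ab})Y \;=\; \sum_{i,j=1}^{a} X_{ij}\,Y_{i}^{*}Y_{j} \;=\; \sum_{k=1}^{ab} V_{k}^{*}XV_{k} \;=\; \Phi(X),
\]
which is the required realization.

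The main obstacle is purely notational: keeping straight the identifications between $\mathbb{C}^{ab}$ and $\mathbb{C}^{a}\otimes\mathbb{C}^{b}$ (the vec operation used to manufacture the $V_k$'s from the $\xi_k$'s) and choosing the row-ordering of $Y$ so that it lines up with the Kronecker convention $X\otimes I_{ab}$ appearing in the statement. Once the indexing is pinned down, each step reduces to a routine verification, and the dimension $a^2 b$ of the range of $Y$ is forced by the size of the Choi matrix together with the number of rank-one summands needed in its spectral decomposition.
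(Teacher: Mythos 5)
The paper does not prove this proposition at all: it is imported verbatim as \cite[Theorem 1]{Choi}, so there is no internal argument to compare against. Your proof reconstructs Choi's original argument (positivity of the Choi matrix $C_\Phi=\Phi^{(a)}(ee^*)$, rank-one decomposition, matricization of the vectors into Kraus operators, then stacking into $Y$), and the overall structure, the dimension count $a^2b$, and the final identity $Y^*(X\otimes I_{ab})Y=\sum_{i,j}X_{ij}Y_i^*Y_j=\sum_k V_k^*XV_k$ are all correct. One point you should pin down rather than defer to ``routine verification'': with your identification $\xi_k=\sum_{i,j}(V_k)_{ij}(e_i\otimes e_j)$, the block-by-block comparison gives $\Phi(E_{ip})=\sum_k v_{k,i}v_{k,p}^*$ (where $v_{k,i}$ is the $i$-th block of $\xi_k$), whereas $\sum_k V_k^*E_{ip}V_k=\sum_k \overline{v_{k,i}}\,v_{k,p}^{\top}$; these differ by a complex conjugation, so the representation you actually obtain is $\Phi(X)=\sum_k V_k^{\top}X\overline{V_k}$. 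The fix is one character --- define $V_k$ by matricizing $\overline{\xi_k}$ rather than $\xi_k$ (equivalently, use the Choi matrix $\sum_{i,j}E_{ij}\otimes\Phi(E_{ji})$) --- and nothing downstream changes, but as written the middle identity is off by a conjugate. This is exactly the notational hazard you flagged yourself, so I would call the proof essentially complete once that convention is fixed.
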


\begin{proof}[Proof of Theorem \ref{fd}.] The sufficiency part is analogous to that of \cite[Theorem 6.1]{BSV}. To prove the necessity,
let $F=QP^{-1}$, where $P$ and $Q$ are matrix polynomials of total
degree at most $g$, $P^*P=Q^*Q$ on $\partial_S\mathcal B$, and
assume that $F\in\mathcal{SA}_Z(\mathbb{C}^s)$. Then by
\cite[Theorem 1.5]{BB} there exist separable Hilbert spaces
$\mathcal{K}_r$ and analytic functions $H_r$ on $\mathcal{B}$ with
values linear operators from $\mathbb{C}^s$ to
$\mathbb{C}^{\ell_r}\otimes\mathcal{K}_r$ such that
$$P(W)^*P(Z)-Q(W)^*Q(Z)=\sum_{r=1}^kH_r(W)^*\Big( (I-W^{(r)*}Z^{(r)})\otimes I_{{\mathcal K}_r}\Big)H_r(Z),
\quad Z,W\in\mathcal{B}.$$ Letting $Z=W=tU$ where $|t|<1$ and
$U\in\partial_S\mathcal B$, we obtain
\begin{equation}\label{cd}\frac{P(tU)^*P(tU)-Q(tU)^*Q(tU)}{1-|t|^2}=\sum_{r=1}^kH_r^*(tU)H_r(tU).\end{equation}
Since $P(tU)^*P(tU)=Q(tU)^*Q(tU)$ for all
$U=(U^{(1)},\ldots,U^{(r)})\in\partial_S\mathcal B$ and $|t|=1$,
the numerator of the left-hand side of \eqref{cd} is a polynomial
in $t$ and $\overline t$ which vanishes on the variety
$1-t\overline t=0$. Therefore the left-hand side of \eqref{cd} is
a polynomial in $t$ and $\overline t$ and a trigonometric
polynomial in matrix entries $u_{ij}^{(r)}$,
$i,j=1,\ldots,\ell_r$, $r=1,\ldots,k$. Let $P_\alpha$ and
$Q_\alpha$ be the coefficients of $z^\alpha$ in the polynomials
$P$ and $Q$, respectively, and let $H_{r,\alpha}$ be the
coefficient of $z^\alpha$ in the Maclaurin series for $H_r$, where
for $z=(z_1,\ldots,z_d)$ and $\alpha=(\alpha_1,\ldots,\alpha_d)$
we set $z^\alpha=z_1^{\alpha_1}\cdots z_d^{\alpha_d}$. Then the
zeroth Fourier coefficient of the left-hand side of \eqref{cd} as
a trigonometric polynomial in variables $u_{ij}^{(r)}$ is
$$\frac1{1-|t|^2}\sum_{|\alpha|\le g}(P^*_\alpha P_\alpha-Q^*_\alpha Q_\alpha)|t|^{2|\alpha|},$$
where $|\alpha|=\alpha_1+\cdots +\alpha_d$. Note that the
preceding expression is a polynomial in $|t|^2$ of degree at most
$g-1$. The zeroth Fourier coefficient of the right-hand side of
\eqref{cd} (for sufficiently small $t$),
$$\sum_{r=1}^k\sum_{\alpha}H^*_{r,\alpha}H_{r,\alpha}|t|^{2|\alpha|},$$
is therefore a polynomial in $|t|^2$ as well, and
$H^*_{r,\alpha}H_{r,\alpha}=0$ for $|\alpha|\ge g$.

Consider now the completely positive map $\Phi_r\colon {\mathbb
C}^{\ell_r\times\ell_r} \to   {\mathbb C}^{s\binom{g+d-1}{d}
\times s\binom{g+d-1}{d}}$ defined via
$$ \Phi(X) = {\rm col}_{|\alpha |\le g-1}  (H_{r,\alpha}^*)(X\otimes I_{{\mathcal K}_r} ) {\rm row}_{|\alpha |\le g-1}
(H_{r,\alpha}). $$ Then by Proposition \ref{Choi} we can find
matrices $Y_r\in\mathbb{C}^{a^2b\times b}$ such that $\Phi_r (X) =
Y_r^*(X\otimes I_{ab})Y_r$, where $a=\ell_r$ and
$b=s\binom{g+d-1}{d}$. Writing $Y_r ={\rm row}_{|\alpha|\le g-1}
(Y_{r,\alpha})$, we can form a polynomial
$$G_r(Z)=\sum_{|\alpha|\le g-1}Y_{r,\alpha}z^\alpha
$$
with the coefficients in $\mathbb{C}^{\ell_rn_r\times s}$, where
$n_r=\ell_rs\binom{g+d-1}{d}$, so that $$H_r(W)^*\Big(
(I-W^{(r)*}Z^{(r)})\otimes I_{{\mathcal K}_r} \Big)H_r(Z)=G_r(W)^*
\Big((I-W^{(r)*}Z^{(r)})\otimes I_{n_r}\Big) G_r(Z),\quad
r=1,\ldots,k,$$ and
\begin{equation}\label{cdpoly}
P(W)^*P(Z)-Q(W)^*Q(Z)=\sum_{r=1}^kG_r(W)^*
\Big((I-W^{(r)*}Z^{(r)})\otimes I_{n_r}\Big)G_r(Z).
\end{equation}
Rearranging the terms in \eqref{cdpoly}, we obtain
$$P(W)^*P(Z)+\sum_{r=1}^kG_r(W)^*\Big(W^{(r)*}Z^{(r)}\otimes I_{n_r}\Big)G_r(Z)=Q(W)^*Q(Z)+\sum_{r=1}^kG_r(W)^*G_r(Z).$$
Therefore
$$\begin{bmatrix}(Z^{(1)}\otimes I_{n_1}) G_1(Z)\\ \vdots\\ (Z^{(k)}\otimes I_{n_k})G_k(Z)\\ P(Z)\end{bmatrix}h\mapsto \begin{bmatrix}G_1(Z)\\\vdots\\ G_k(Z)\\ Q(Z)\end{bmatrix}h$$
is a linear and isometric map from the span of the elements on the
left to the span of the elements on the right. It may be extended
(if necessary) to a unitary matrix
$\begin{bmatrix}A&B\\C&D\end{bmatrix}$ so that
\begin{align*}
&AZ_nG(Z)+BP(Z)=G(Z)\\
&CZ_nG(Z)+DP(Z)=Q(Z),
\end{align*}
for every $Z$; here $Z_n=\bigoplus_{r=1}^k(Z^{(r)}\otimes
I_{n_r})$. Solving the first equation above for $G(Z)$ and then
plugging the result into the second equation yield
$$F(Z)=Q(Z)P^{-1}(Z)=D+CZ_n(I-AZ_n)^{-1}B.$$
\end{proof}

\begin{rem}\label{rem:I-II-III} An analog of Theorem \ref{fd}, with a similar proof, is also valid for Cartesian products of Cartan's
domains of type II or III \cite{Hua} or for  Cartesian products of
mixed type involving Cartan's domains of types I, II, and III. We
recall here that a Cartan domain of type II (resp., III) is a
(lower-dimensional) subset of a square-matrix Cartan's domain of
type I consisting of symmetric (resp., antisymmetric) matrices.
\end{rem}

\section{Eventual Agler denominators} \label{sec:eventual_Agler_denom}
We will say that a polynomial $v$ in $z^{(r)}_{ij}$,
$i,j=1,\ldots,\ell_r$, $r=1,\ldots,k$, is almost self-reversive
with respect to the square-matrix polyball $\mathcal{B}$ if
$\overleftarrow{v}=\gamma v$, for some scalar $\gamma$ with
$|\gamma |=1$.

We have the following generalization of a result that was
announced in \cite{GKWMTNS} for the case of a unit polydisk.

\begin{thm}\label{Aglerdenrev} Let $p$ be a $\mathcal{B}$-stable polynomial which is coprime with $\overleftarrow{p}$. Then the following are equivalent:
\begin{itemize}
    \item[(i)] $p$ is an eventual Agler denominator, that is, there exist nonnegative integers $s_1, \ldots , s_k$
    such that the rational inner function
    $ \prod_{r=1}^k (\det Z^{(r)})^{s_r}\overleftarrow{p}(Z)/p(Z)$ is in the Schur--Agler class $\mathcal{SA}_Z$.
    \item[(ii)] There exists an almost self-reversive polynomial $v$ of multidegree $(s_1,\ldots , s_k)$ such that $p(Z)v(Z)=\det(I-KZ_n)$ for some nonnegative integers
    $n_r$, $r=1,\ldots , k$, and a contractive matrix $K$, where $Z_n = \bigoplus_{r=1}^k ( Z^{(r)} \otimes I_{n_r})$.
\end{itemize}
\end{thm}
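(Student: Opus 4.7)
My plan is to pass between the Schur--Agler membership of $f$ and the contractive determinantal representation of $pv$ using Theorem~\ref{fd}, which provides a unitary realization of $f$, together with the Schur complement identity for the transfer function.

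For $(i)\Rightarrow(ii)$: Applying Theorem~\ref{fd} to $f=\prod_{r=1}^k(\det Z^{(r)})^{s_r}\overleftarrow{p}/p$ yields a finite-dimensional unitary realization $f(Z)=D+CZ_n(I-AZ_n)^{-1}B$ with unitary $U=\bbm A & B \\ C & D \ebm$. I would take $K:=A$, which is a contraction since $U$ is unitary. The Schur complement formula
\[ \det(I-AZ_n)\,f(Z)=\det\bbm I-AZ_n & -B \\ CZ_n & D \ebm \]
expresses the left-hand side as a polynomial, and the coprimality of $p$ with $\prod_r(\det Z^{(r)})^{s_r}\overleftarrow{p}$ forces $p\mid\det(I-AZ_n)$. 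Setting $v:=\det(I-AZ_n)/p$ gives $pv=\det(I-KZ_n)$.

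The substantive point is that $v$ is almost self-reversive of multidegree $(s_1,\dots,s_k)$. When $D\ne 0$, the unitarity of $U$ yields the block inversion identity $A-BD^{-1}C=A^{-*}$, and hence $f(Z)\det(I-AZ_n)=D\det(I-A^{-*}Z_n)$. A direct manipulation using $\det Z_n=\prod_r(\det Z^{(r)})^{n_r}$ together with the relation $\det(I-A^{-*}Z_n)=(-1)^N(\overline{\det A})^{-1}\det Z_n\cdot\overline{\det(I-AZ_n^{-*})}$ (with $N$ the size of $Z_n$) rewrites $f(Z)\det(I-AZ_n)$ in terms of $\overleftarrow{\det(I-AZ_n)}=\overleftarrow{p}\,\overleftarrow{v}$. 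Comparing with the Rudin form of $f$ and cancelling $\overleftarrow{p}/p$ gives
\[ \overleftarrow{v}=\gamma\prod_r(\det Z^{(r)})^{s_r-n_r+N_r}\,v, \]
where $N_r$ is the $r$-block total degree of $\det(I-AZ_n)$ and $\gamma$ is a unimodular constant. Forcing the exponents $s_r-n_r+N_r$ to vanish (so that $\overleftarrow v=\gamma v$) and the $r$-block degree $\deg_r v=N_r-\deg_r p$ to equal $s_r$ pins down the required dimensions; I would enlarge the original realization via a direct sum $A\oplus J$ with a unitary $J$ whose characteristic polynomial on the extra states is almost self-reversive of the appropriate multidegree, so that $v$ inherits these properties after augmentation. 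The case $D=0$ is reduced to $D\ne 0$ by a small inner perturbation of $f$.

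For $(ii)\Rightarrow(i)$: Starting from $\det(I-KZ_n)=pv$ with $K$ a contraction and $v$ almost self-reversive, I would build an Agler decomposition for $f$ directly. The contractivity $I-K^*K\ge 0$ splits compatibly with the block decomposition of $Z_n$, and together with the identity $\det(I-KZ_n)=pv$ it produces a factorization of the form
\[ p(W)^*v(W)^*\,p(Z)v(Z)-\mathcal N(W,Z)=\sum_{r=1}^{k}G_r(W)^{*}\bigl((I-W^{(r)*}Z^{(r)})\otimes I\bigr)G_r(Z), \]
where $\mathcal N(W,Z)$ is the matching numerator kernel and the $G_r$ are built from $(I-KZ_n)^{-1}$ and a block square root of $I-K^*K$. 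The almost-self-reversive identity $\overleftarrow v=\gamma v$ is precisely what allows the $v$-factors to cancel and to identify $\mathcal N(W,Z)$ with $\bigl(\prod_r(\det W^{(r)})^{s_r}\overleftarrow p(W)\bigr)^{*}\bigl(\prod_r(\det Z^{(r)})^{s_r}\overleftarrow p(Z)\bigr)$. By \cite[Theorem~1.5]{BB} this places $f=\prod_r(\det Z^{(r)})^{s_r}\overleftarrow p/p$ in $\mathcal{SA}_Z$.

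The main obstacle is the multidegree bookkeeping in $(i)\Rightarrow(ii)$: ensuring $\deg_r v=s_r$, rather than merely that $v$ be almost self-reversive with some unspecified multidegree, requires a controlled enlargement of the unitary realization whose auxiliary block must simultaneously preserve the unitary structure and contribute an almost-self-reversive factor of exactly the right $r$-block degrees. The $D=0$ case in the identity $A-BD^{-1}C=A^{-*}$ is a secondary technical point handled by perturbation.
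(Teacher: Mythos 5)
Your overall architecture matches the paper's: for (i)$\Rightarrow$(ii) you use the realization from Theorem~\ref{fd}, the Schur complement formula, coprimality to get $p\mid\det(I-AZ_n)$, and $K=A$; for (ii)$\Rightarrow$(i) you pass from the contractive determinantal representation of $pv$ to an Agler decomposition and cancel the $v$-factors using almost self-reversivity, which is exactly the paper's route via \cite[Theorem~5.2]{GKVW}. But there is a genuine gap in the step you yourself flag as the main obstacle: establishing that $v$ is almost self-reversive with the right multidegree. Your route goes through $A-BD^{-1}C=A^{-*}$, which requires $D\neq 0$ (and in fact $A$ invertible as well), and your two proposed fixes do not work as stated. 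Perturbing $f$ to make $D\neq 0$ changes the function and hence its denominator, so the conclusion would be about a perturbed $p$, not the given one, with no mechanism to transfer it back. The augmentation $A\oplus J$ is both unjustified and unnecessary: appending a unitary $J$ (with zero coupling, to preserve unitarity of the colligation) multiplies $v$ by $\det(I-JZ_{n''})$ and therefore changes the exponents you are trying to control rather than ``forcing them to vanish''; more importantly, nothing needs to be forced.

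The paper avoids all of this with the identity
$$\det \begin{bmatrix} I-AZ_n & B \\ -CZ_n & D \end{bmatrix}
= \det \begin{bmatrix} A & B \\ C & D \end{bmatrix}
\det\left(\begin{bmatrix} A^* & 0 \\ B^* & 0\end{bmatrix} - \begin{bmatrix} Z_n & 0 \\ 0 & -1\end{bmatrix}\right)
= \lambda\,\det(A^*-Z_n),$$
which holds for \emph{any} unitary colligation, with no invertibility of $D$ or $A$. This gives
$\prod_{r}(\det Z^{(r)})^{s_r}\overleftarrow{p}(Z)\det(I-AZ_n)=\lambda\, p(Z)\det(A^*-Z_n)$
directly; substituting $pv=\det(I-AZ_n)$ and $\det Z_n\,\overline{p(Z^{*-1})}\,\overline{v(Z^{*-1})}=\det(Z_n-A^*)$ and cancelling $p\overleftarrow{p}$ leaves an identity of the form $\prod_r(\det Z^{(r)})^{e_r}v=\text{(unimodular)}\cdot\overleftarrow{v}$. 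The exponents $e_r$ are then automatically zero: $e_r>0$ is impossible by comparing $r$-block degrees ($\deg_r\overleftarrow{v}\le\deg_r v$), and $e_r<0$ would force $\det Z^{(r)}\mid v$, contradicting $v(0)=1/p(0)\ne 0$. So almost self-reversivity and the degree relation $s_r=n_r-\deg_r p-\deg_r v$ fall out of the determinantal identity with no case split, no perturbation, and no enlargement of the realization. If you replace your $D\neq 0$ computation with this colligation identity, the rest of your argument goes through.
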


\begin{proof}  (i)$\Rightarrow$(ii) Let $f(Z)=\prod_{r=1}^k (\det Z^{(r)})^{s_r}\overleftarrow{p}(Z)/p(Z)$ be in $\mathcal{SA}_Z$.  By Theorem \ref{fd} there exists a $k$-tuple
 $n=(n_1, \ldots , n_k)$ of nonnegative integers and a unitary matrix
$ \left[\begin{smallmatrix} A & B \\ C & D
\end{smallmatrix}\right]
$ such that $$f(Z)= D + CZ_n (I-AZ_n)^{-1} B. $$
 Observe that
$$ f(Z) = D+CZ_n (I-AZ_n)^{-1}B = \frac{\det \begin{bmatrix} I-AZ_n & B \\ -CZ_n & D
\end{bmatrix}}{\det (I-AZ_n)} $$
and
$$ \det \begin{bmatrix}   I-AZ_n & B \\ -CZ_n & D \end{bmatrix}= \det \begin{bmatrix} A & B \\ C & D
 \end{bmatrix}\det \left( \begin{bmatrix} A^* & 0 \\ B^* & 0 \end{bmatrix} -  \begin{bmatrix} Z_n & 0 \\ 0 & -1
 \end{bmatrix} \right)=
\lambda \det (A^* - Z_n),
$$
where $\lambda = \det \left[\begin{smallmatrix} A & B \\ C & D
\end{smallmatrix}\right]$. Hence \begin{equation}\label{pq}
\prod_{r=1}^k (\det Z^{(r)})^{s_r}\overleftarrow{p}(Z) \det
(I-AZ_n) = \lambda p(Z) \det(A^* - Z_n).
\end{equation} Since $p$ is $\mathcal{B}$-stable and coprime with $\overleftarrow{p}$, the polynomials $p$ and $\prod_{r=1}^k (\det Z^{(r)})^{s_r}\overleftarrow{p}$ do
not have common factors. Therefore $p$ divides $\det (I-AZ_n)$,
i.e., there exists a polynomial $v$ so that
$$ p(Z)v(Z)= \det (I-AZ_n). $$
Then $$ \det Z_n \overline{p(Z^{*-1})}\overline{v(Z^{*-1})}  =
 \det (Z_n - A^*). $$ Using \eqref{pq}, we obtain
$$ \prod_{r=1}^k (\det Z^{(r)})^{s_r} \overleftarrow{p} (Z) p(Z) v(Z) = \lambda \prod_{r=1}^k (\det Z^{(r)})^{n_r-{\rm deg}_r p}p(Z)\overleftarrow{p} (Z) \overline{v(Z^{*-1})} (-1)^{\sum_{r=1}^k \ell_r n_r},$$
where ${\rm deg}_rp$ is the total degree of $p$ in the variables
$z_{ij}^{(r)}$.
 After dividing out we see that
$v$ is almost self-reversive and $s_r=n_r-\deg_rp-\deg_rv$.
Clearly, $K=A$ is a contractive matrix.

(ii)$\Rightarrow$(i) Suppose there exists an almost self-reversive
polynomial $v$ such that $p(Z)v(Z)=\det(I-KZ_n)$ with a
contractive matrix $K$ and a $k$-tuple $n=(n_1,\ldots,n_k)$ of
nonnegative integers. Then by a straightforward modification of
\cite[Theorem 5.2]{GKVW} the rational inner function
$$\frac{\prod_{r=1}^k (\det Z^{(r)})^{n_r} \overline{p(Z^{*-1})}\overline{v(Z^{*-1})}}{p(Z)v(Z)}$$
is Schur--Agler. Since $v$ is almost self-reversive, the rational
inner function
$$\frac{\prod_{r=1}^k (\det Z^{(r)})^{n_r-{\rm deg}_r v} \overline{p(Z^{*-1})}}{p(Z)}$$ is Schur--Agler, i.e., $p$ is an eventual Agler denominator.
\end{proof}

\begin{rem} An analog of Theorem \ref{Aglerdenrev} is valid for a
Cartesian product of Cartan's domains of type II, i.e., for a
domain $\mathcal{B}_{\rm sym}$, or for a Cartesian product of
mixed type involving Cartan's domains of types I and II; see
Remark \ref{rem:sym} and Remark \ref{rem:I-II-III}.
\end{rem}

The following result is a generalization of \cite[Corollary
3.2]{GKVVW2} formulated for the polydisk $\mathbb{D}^d$ to the
case of the square-matrix polyball $\mathcal{B}$.

\begin{thm}\label{thm:strongly-stable} Every strongly $\mathcal{B}$-stable polynomial is an eventual Agler denominator.
\end{thm}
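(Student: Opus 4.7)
The plan is to establish part (i) of Theorem~\ref{Aglerdenrev}: choose nonnegative integers $s_1,\ldots,s_k$ so that the rational inner function
\[
f(Z) \,=\, \prod_{r=1}^{k}(\det Z^{(r)})^{s_r}\,\frac{\overleftarrow{p}(Z)}{p(Z)}
\]
lies in $\mathcal{SA}_Z$. Setting $P(Z)=p(Z)$ and $Q(Z)=\prod_{r=1}^{k}(\det Z^{(r)})^{s_r}\overleftarrow{p}(Z)$, the $s_r$'s should first be taken large enough that $f$ is actually rational inner as produced by Theorem~\ref{rudin}, and then, if needed, increased further to make the Agler decomposition below feasible.

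By \cite[Theorem 1.5]{BB}, the membership $f=Q/P\in\mathcal{SA}_Z$ is equivalent to the existence of analytic $H_r$ on $\mathcal{B}$ with values in $\mathbb{C}^{\ell_r}\otimes\mathcal{K}_r$ for which
\[
P(W)^*P(Z)-Q(W)^*Q(Z)\,=\,\sum_{r=1}^{k}H_r(W)^*\bigl((I-W^{(r)*}Z^{(r)})\otimes I_{\mathcal{K}_r}\bigr)H_r(Z),\quad Z,W\in\mathcal{B}.
\]
Strong stability supplies the positivity that makes such a decomposition attainable. Since $p$ is nonvanishing on the compact set $\overline{\mathcal{B}}$, there exist $\rho>1$ with $p$ nonvanishing on the dilated polyball $\rho\overline{\mathcal{B}}$, and a constant $c>0$ with $|p|\ge c$ throughout $\overline{\mathcal{B}}$. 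Hence $f$ extends continuously to $\overline{\mathcal{B}}$ and is Schur, so $P^*P-Q^*Q=|p|^2\bigl(1-|f|^2\bigr)$ is a Hermitian polynomial, nonnegative on $\overline{\mathcal{B}}$, vanishing on $\partial_S\mathcal{B}$ (by Proposition~\ref{irr} and the rational inner property), and --- thanks to the uniform gap --- bounded below by a positive multiple of $\sum_{r}\|I-Z^{(r)*}Z^{(r)}\|$ near $\partial_S\mathcal{B}$.

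The main obstacle is turning this positivity certificate into the explicit Agler decomposition above. One natural route is a noncommutative Putinar-type Positivstellensatz for the defining system $\{I-Z^{(r)*}Z^{(r)}\succeq 0\}_{r=1}^{k}$: the polyball $\overline{\mathcal{B}}$ is compact (Archimedean), and any Hermitian polynomial bounded below by such a weighted sum should admit a representation of the displayed form with \emph{polynomial} $H_r$. A second, more hands-on route mimics the polydisk argument of \cite[Corollary~3.2]{GKVVW2}: introduce the dilates $p_\sigma(Z):=p(\sigma Z)$ for $1<\sigma<\rho$, solve the problem for $p_\sigma$ --- for which the gap $|p_\sigma|^2-|Q_\sigma|^2$ is uniformly positive on all of $\overline{\mathcal{B}}$ --- extract finite-dimensional unitary realizations through Theorem~\ref{fd} whose dimension is bounded in terms of $\deg p$ alone, and pass to a subsequential limit as $\sigma\to 1$. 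Either approach delivers $f\in\mathcal{SA}_Z$ and hence, via Theorem~\ref{Aglerdenrev}(i), shows that $p$ is an eventual Agler denominator.
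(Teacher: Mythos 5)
There is a genuine gap: you correctly identify the main obstacle --- converting the positivity of $P^*P-Q^*Q$ into the structured Agler decomposition --- but you do not overcome it. Both of your routes are left at the level of ``should admit a representation'' and ``mimic the argument,'' and the step you are deferring is precisely the theorem that has to be proved. Worse, the quantitative claim you lean on for the Putinar route is false: for $f(z)=z_1$ on the bidisk (i.e.\ $p=1$), at points with $|z_1|^2=1-\epsilon^2$ and $|z_2|^2=1-\epsilon$ one has $1-|f|^2=\epsilon^2$ while $\sum_r(1-|z_r|^2)\approx\epsilon$, so $1-|f|^2$ is \emph{not} bounded below by a positive multiple of $\sum_r\|I-Z^{(r)*}Z^{(r)}\|$ near $\partial_S\mathcal{B}$. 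And since $P^*P-Q^*Q$ vanishes identically on $\partial_S\mathcal{B}\subset\overline{\mathcal{B}}$, it is not strictly positive on the domain, so an off-the-shelf Archimedean Positivstellensatz does not apply; what is needed is the much more delicate Hermitian Positivstellensatz/lurking-contraction machinery of \cite{GKVVW1,GKVVW2}, which you cannot simply invoke as a black box in this form.

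The intended route is shorter and uses the equivalence you already have in hand. Theorem \ref{Aglerdenrev}(ii)$\Rightarrow$(i) reduces the problem to exhibiting a contractive determinantal representation $p(Z)v(Z)=\det(I-KZ_n)$ with $v$ almost self-reversive. For a \emph{strongly} $\mathcal{B}$-stable $p$, \cite[Theorem 3.1]{GKVVW2} supplies exactly this with $v=1$ and $K$ strictly contractive: $p(Z)=\det(I-KZ_n)$. One then runs the last paragraph of the proof of Theorem \ref{Aglerdenrev} (the modification of \cite[Theorem 5.2]{GKVW}) with $v=1$ to conclude that $\prod_{r=1}^k(\det Z^{(r)})^{n_r-\deg_r p}\,\overleftarrow{p}/p$ is Schur--Agler. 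Your proposal bypasses condition (ii) entirely and tries to reprove, in effect, the content of \cite{GKVVW2} from scratch; as written it does not constitute a proof.
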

\begin{proof}
By \cite[Theorem 3.1]{GKVVW2} there exist a $k$-tuple of
nonnegative integers $n=(n_1,\ldots,n_k)$ and a strictly
contractive matrix
$K\in\mathbb{C}^{(\sum_{r=1}^k\ell_rn_r)\times(\sum_{r=1}^k\ell_rn_r)}$
such that $p(Z)=\det(I-KZ_n)$, where
$Z_n=\bigoplus_{r=1}^k(Z^{(r)}\otimes I_{n_r})$. Then, similarly
to the last paragraph in the proof of Theorem \ref{Aglerdenrev}
(with $v=1$), one shows that $p$ is an eventual Agler denominator.
\end{proof}

\begin{cor}\label{cor:inner-SA}
Let $f$ be a rational inner function on $\mathcal{B}$ which is
regular on $\partial_S\mathcal{B}$. Then there exist nonnegative
integers $s_1,\ldots,s_k$ such that $\prod_{r=1}^k(\det
Z^{(r)})^{s_r}f\in\mathcal{SA_Z}$.
\end{cor}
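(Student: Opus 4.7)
The plan is to reduce the corollary to Theorem~\ref{thm:strongly-stable}, using Rudin's description from Theorem~\ref{rudin} and exploiting the flexibility in the size parameter of the determinantal representation. First, apply Theorem~\ref{rudin} to write
$$f(Z) = \prod_{r=1}^k (\det Z^{(r)})^{m_r}\,\frac{\overleftarrow{p}(Z)}{p(Z)}$$
with $p$ a $\mathcal{B}$-stable polynomial coprime with $\overleftarrow{p}$ and $m_r\ge 0$; regularity of $f$ on $\partial_S\mathcal{B}$ forces the coprime denominator $p$ to be nonvanishing on $\partial_S\mathcal{B}$.

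Next, I would promote this to strong $\mathcal{B}$-stability of $p$ by reducing to the polydisk. Writing each $Z^{(r)} = U^{(r)}\Sigma^{(r)}V^{(r)*}$ by singular value decomposition and substituting diagonal matrices, the polynomial $\tilde p(\lambda) := p\bigl(U\,\diag(\lambda)\,V^{*}\bigr)$ is $\mathbb{D}^{D}$-stable on the polydisk of dimension $D := \ell_1+\cdots+\ell_k$, since $U^{(r)}\diag(\lambda^{(r)})V^{(r)*}$ is a contraction whenever each $|\lambda^{(r)}_i|<1$. If $p(Z^0)=0$ for some $Z^0\in\overline{\mathcal{B}}\setminus\partial_S\mathcal{B}$, then $\tilde p$ vanishes at the tuple $\sigma$ of singular values of $Z^0$, which lies in $\overline{\mathbb{D}^D}\setminus\mathbb{T}^D$. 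A Hurwitz slicing argument (perturb the unit-modulus coordinates of $\sigma$ slightly into $\mathbb{D}$ and apply Hurwitz's theorem coordinate by coordinate) then forces $\tilde p$ to vanish identically on the affine slice obtained by fixing the unit-modulus coordinates of $\sigma$ and letting the remaining ones range over $\mathbb{C}$; this slice meets $\mathbb{T}^D$, so $\tilde p$ has a zero there and hence $p$ has a zero on $\partial_S\mathcal{B}$, contradicting the hypothesis. Thus $p$ is strongly $\mathcal{B}$-stable.

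By Theorem~\ref{thm:strongly-stable} (using \cite[Theorem~3.1]{GKVVW2} inside its proof) I now obtain a strictly contractive $K$ and nonnegative integers $n_1,\ldots,n_k$ with $p(Z)=\det(I-KZ_n)$. Replacing $K$ by $K\oplus 0$ and each $n_r$ by any larger integer preserves this identity and keeps the extended $K$ contractive, so the $n_r$ may be enlarged at will; in particular I arrange $n_r\ge\deg_r p+m_r$ for all $r$. Running the argument of Theorem~\ref{Aglerdenrev}(ii)$\Rightarrow$(i) with $v\equiv 1$ on this padded representation then gives
$$\prod_{r=1}^k (\det Z^{(r)})^{n_r-\deg_r p}\,\frac{\overleftarrow{p}(Z)}{p(Z)}\in\mathcal{SA}_Z,$$
and setting $s_r:=n_r-\deg_r p-m_r\ge 0$ yields
$$\prod_{r=1}^k (\det Z^{(r)})^{s_r}f(Z)=\prod_{r=1}^k (\det Z^{(r)})^{n_r-\deg_r p}\,\frac{\overleftarrow{p}(Z)}{p(Z)}\in\mathcal{SA}_Z,$$
as required.

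The main obstacle I anticipate is the slicing step promoting $\partial_S\mathcal{B}$-regularity to strong $\mathcal{B}$-stability of $p$: the Hurwitz iteration needs some care, though it is essentially standard in the polydisk setting. The remaining steps amount to bookkeeping with exponents on top of results already developed in the paper.
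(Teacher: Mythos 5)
Your proposal is correct and takes essentially the same route as the paper's own proof, which consists of exactly the chain you describe: Theorem \ref{rudin} to get the coprime representation, the observation that regularity on $\partial_S\mathcal{B}$ makes $p$ strongly $\mathcal{B}$-stable, and then Theorem \ref{thm:strongly-stable}. The two places where you add substance --- the SVD-plus-Hurwitz slicing argument promoting nonvanishing on $\partial_S\mathcal{B}$ to strong $\mathcal{B}$-stability, and the padding of the determinantal representation to make the exponent bookkeeping $s_r=n_r-\deg_r p-m_r\ge 0$ come out nonnegative --- are correct fillings of steps the paper asserts without detail (the paper instead implicitly uses that $\prod_{r=1}^k(\det Z^{(r)})^{m_r}$ is itself Schur--Agler and that the class is closed under products).
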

\begin{proof}
By Theorem \ref{rudin} there exists a stable polynomial $p$ which
is coprime with $\overleftarrow{p}$ and such that \eqref{eq:rudin}
holds with some nonnegative integers $m_1,\ldots,m_r$. Since $f$
is regular on $\partial_S\mathcal{B}$, the polynomial $p$ is
strongly $\mathcal{B}$-stable. By Theorem
\ref{thm:strongly-stable}, $p$ is an eventual Agler denominator.
Therefore $\prod_{r=1}^k(\det Z^{(r)})^{s_r}f\in\mathcal{SA_Z}$
for some nonnegative integers $s_1,\ldots,s_k$.
\end{proof}

The question on whether the assumption of regularity of $f$ on
$\partial_S\mathcal{B}$ in Corollary \ref{cor:inner-SA} can be
removed is open. Another open question is whether Corollary
\ref{cor:inner-SA} holds for {\em matrix-valued} rational inner
functions. Finally, it is interesting to investigate the analogues
of the results in this paper for the unbounded version of the
domain $\mathcal{B}$, i.e., the Cartesian product of matrix
halfplanes. The Cayley transform over the matrix variables
$Z^{(r)}$, $r=1,\ldots,k$, would allow one to obtain a
finite-dimensional realization formula for rational inner
functions on the product of matrix halfplanes; if, in addition,
the Cayley transform over the values of a function is applied,
then one can obtain the corresponding realization formula for
rational Cayley inner functions over the product of matrix
halfplanes (see \cite{BK} for the case of a polyhalfplane, i.e.,
the product of scalar halfplanes). We would also like to mention
\cite{K} where a subclass of Cayley inner functions on the product
of matrix halfplanes, the Bessmertnyi class, was studied.


\begin{thebibliography}{10}

\bibitem{Ag} J.~Agler.
\newblock On the representation of certain holomorphic functions
defined on a polydisc.
\newblock In  Topics in operator theory: Ernst D. Hellinger Memorial
Volume,  {\em Oper. Theory Adv. Appl.},
 Vol.~48, pp. 47--66,
  Birkh\"auser, Basel, 1990.

\bibitem{AT}
C.-G. Ambrozie and D. A. Timotin. Von Neumann type inequality for
certain domains in
 $\mathbb{C}^n$. {\em Proc. Amer. Math. Soc.} 131 (2003), no. 3, 859--869 (electronic).

\bibitem{Arv3} W. Arveson. Subalgebras of C*-algebras. III. Multivariable operator theory.
  {\em Acta Math.} 181 (1998), no. 2, 159--228.


 \bibitem{BB}
 J. A. Ball and V. Bolotnikov. Realization and interpolation for Schur--Agler-class functions on domains with matrix
 polynomial defining function in $\mathbb{C}^n$. {\em J. Funct. Anal.} 213 (2004), no. 1, 45--87.



\bibitem{BK}  J. A. Ball and D. S. Kaliuzhnyi-Verbovetskyi.
  Rational Cayley inner Herglotz--Agler functions: Positive-kernel decompositions and
   transfer-function realizations. {\em Linear Algebra Appl.}  456 (2014), 138--156.

\bibitem{BSV}
 J. A. Ball, C. Sadosky, and V. Vinnikov. Scattering systems with several evolutions and
multidimensional input/state/output systems. {\em Integral
Equations and Operator Theory} 52 (2005), 323--393.


\bibitem{Bocher} M. Bocher, {\it Introduction to higher algebra}. Dover Publications, Inc., New York 1964.



\bibitem{Choi} M. D. Choi, Completely positive linear maps on complex matrices. {\em Linear Algebra and Appl.} 10 (1975), 285--290.

\bibitem{ColeWermer} B. J. Cole and J. Wermer. Ando's theorem and sums of squares.
Indiana Univ. Math. J. 48 (1999), no. 3, 767--791.


\bibitem{GKVVW2} A. Grinshpan, D. S. Kaliuzhnyi-Verbovetskyi, V. Vinnikov, and H. J. Woerdeman.
Contractive determinantal representations of stable polynomials on
a matrix polyball. arXiv:1503.06161.


\bibitem{GKVVW1} A. Grinshpan, D. S. Kaliuzhnyi-Verbovetskyi, V. Vinnikov, and H. J. Woerdeman.
Matrix-valued Hermitian Positivstellensatz, lurking contractions,
and contractive determinantal representations of stable
polynomials. {\em Oper. Theory: Adv. Appl.}, to appear; also
available in arXiv:1501.05527.


\bibitem{GKVW} A. Grinshpan, D. S. Kaliuzhnyi-Verbovetskyi, and H. J. Woerdeman.
Norm-constrained determinantal representations of multivariable
polynomials. {\em Complex Anal. Oper. Theory} 7 (2013), 635--654.

\bibitem{GKWMTNS} A. Grinshpan, D. S. Kaliuzhnyi-Verbovetskyi, and H. J. Woerdeman. The Schwarz Lemma and
the Schur-Agler Class. Proceedings of the 21st International
Symposium on Mathematical Theory of Networks and Systems,
Groningen, 2014.


\bibitem{Hua}
L. K. Hua. {\it Harmonic analysis of functions of several complex
variables in the classical domains.} Translated from the Russian
by Leo Ebner and Adam Kor\'{a}nyi. American Mathematical Society,
Providence, R.I. 1963 iv+164 pp.

\bibitem{K}
D. S. Kalyuzhnyi-Verbovetzkii. On the Bessmertnyi class of
homogeneous positive holomorphic functions on a product of matrix
halfplanes. In Operator theory, systems theory and scattering
theory: multidimensional generalizations, {\em Oper. Theory Adv.
Appl.}, Vol. 157, pp. 139--164, Birkh\"auser, Basel, 2005.

\bibitem{Knese2011} G. Knese. Rational inner functions in the Schur-Agler class of the polydisk. {\em Publ. Mat.},
55 (2011), 343--357.

\bibitem{KoVa} A. Kor\'anyi and S. Vagi. Rational inner functions on bounded symmetric domains.  {\em Trans. Amer. Math. Soc.} 254 (1979), 179--193.


\bibitem{Rudin}
W. Rudin, {\it Function theory in polydiscs}. W. A. Benjamin,
Inc., New York--Amsterdam 1969.



\bibitem{T1}
J. L. Taylor.  A joint spectrum for several commuting operators.
{\em J. Functional Analysis} 6 (1970), 172--191.

\bibitem{T2}
J. L. Taylor. The analytic-functional calculus for several
commuting operators. {\em Acta Math.} 125 (1970), 1--38.

\bibitem{Varo}
N.~Th. Varopoulos.
\newblock On an inequality of von {N}eumann and an application of the metric
  theory of tensor products to operators theory.
\newblock {\em J. Functional Analysis}, 16:83--100, 1974.




\end{thebibliography}
\end{document}